\newtheorem{theorem}{Theorem}[section]
\newtheorem{lemma}[theorem]{Lemma}
\theoremstyle{plain}
\newtheorem{definition}[theorem]{Definition}
\theoremstyle{definition}
\theoremstyle{remark}
\numberwithin{equation}{section}
\begin{document}

\title[Zero map between obstruction spaces]{Zero map between obstruction spaces: subvarieties versus cycles}

\author{Sen Yang}
\address{Yau Mathematical Sciences Center, Tsinghua University
\\
Beijing, China}
\email{syang@math.tsinghua.edu.cn; senyangmath@gmail.com}

\subjclass[2010]{14C25}
\date{}

\begin{abstract}
 For $Y \subset X$ a locally complete intersection of codimension $p$, Spencer Bloch \cite{Bloch2} constructed the semi-regularity map $\pi:  H^{1}(\mathcal{N}_{Y/X}) \to H^{p+1}(\Omega_{X/k}^{p-1})$.
As an analogue, we construct a map $\tilde{\pi}:  H^{1}(\mathcal{N}_{Y/X}) \to H^{p+1}(\Omega_{X/\mathbb{Q}}^{p-1})$, 
without assuming locally complete intersections. 

While the semi-regularity map $\pi$ is expected to be injective in \cite{Bloch2}, we show that $\tilde{\pi}$ is a zero map. 
We use this zero map to interpret how to eliminate obstructions to deforming cycles, an idea by Mark Green and Phillip Griffiths in \cite{GGtangentspace}.
 
\end{abstract}

\maketitle

\tableofcontents

\section{\textbf{Introduction}}
\label{Introduction}

%% \textbf{Notations and conventions}.

%% (1).K-theory used in this note will be Thomason-Trobaugh non-connective K-theory, if not stated otherwise. 

%% (2).For any abelian group $M$, $M_{\mathbb{Q}}$ denotes the image of $M$ in $M \otimes_{\mathbb{Z}} \mathbb{Q}$. 

%% (3).$X[\varepsilon]$ is the first order trivial deformation of $X$, i.e., $X[\varepsilon]=X \times_{k} \mathrm{Spec}(k[\varepsilon])$, where $\mathrm{Spec}(k[\varepsilon])$ denotes the dual number, $\varepsilon^{2}=0$.

Let $X$ be a nonsingular projective variety over a field $k$ of characteristic $0$. For $Y \subset X$ a subvariety of codimension $p$, it is well-known that the lifting of $Y$ to higher order may be obstructed. 

However, Green-Griffiths predicts that we can eliminate obstructions in their program \cite{GGtangentspace}, by considering $Y$ as a \textbf{cycle}. 
For $p=1$, Green-Griffiths' idea was realized by TingFai Ng in his Ph.D thesis \cite{Ng}. 
For general $p$($1\leqslant p \leqslant \mathrm{dim}(X)$), Mark Green and Phillip Griffiths asked an open question on eliminating obstructions in \cite{GGtangentspace}(page 187-190),  which has been reformulated and has been answered affirmatively in \cite{Y-33}. In \cite{Ng}, TingFai Ng asked a concrete question of eliminating obstructions to deforming curves on a three-fold, which has been studied in \cite{Y-4}.

On the other hand, for $Y$ a locally complete intersection of codimension $p$, Spencer Bloch \cite{Bloch2} constructed the semi-regularity map
 \[
 \pi:  H^{1}(\mathcal{N}_{Y/X}) \to H^{p+1}(\Omega_{X/k}^{p-1}),
 \]
 generalizing the construction by Kodaira-Spencer \cite{KS} for the divisor case.
Guided by Spencer Bloch's semi-regularity map and Mark Green and Phillip Griffiths' idea on eliminating obstructions to deforming cycles, we construct a map 
 \[
 \tilde{\pi}:  H^{1}(\mathcal{N}_{Y/X}) \to H^{p+1}(\Omega_{X/\mathbb{Q}}^{p-1}).
 \]
In fact, the map $\tilde{\pi}$ can be immediately deduced from the sheafification of the map defined in Definition 4.1 in \cite{Y-3}. For the readers' convenience, we sketch it very briefly as follows.

For $X$ a nonsingular projective variety over a field $k$ of characteristic $0$, let $Y \subset X$ be a subvariety of codimension $p$, the normal sheaf $\mathcal{N}_{Y/X}$ is defined to be $\it {Hom}_{Y}(I/I^{\mathrm{2}}, O_{Y})$, where $I$ is the ideal sheaf of $Y$ in $X$. For any arbitrary open $U \subset X$ with $U \cap Y \neq \emptyset$, 
\[
\Gamma(U, \mathcal{N}_{Y/X}) = \mathrm{Hom}_{U \cap Y}(I/I^{2}\mid_{U \cap Y}, O_{U \cap Y}).
\]

To fix notations, $k[\varepsilon]/(\varepsilon^{2})$ is the ring of dual numbers. Let $(U \cap Y)^{'}$ be a first order infinitesimal deformation of $U \cap Y$ in $U$, that is, $(U \cap Y)^{'} \subset U[\varepsilon]/(\varepsilon^{2})$ such that $(U \cap Y)^{'}$ is flat over $\mathrm{Spec}(k[\varepsilon]/(\varepsilon^{2}))$ and $(U \cap Y)^{'} \otimes_{\mathrm{Spec}(k[\varepsilon]/(\varepsilon^{2}))} \mathrm{Spec}(k) \cong U \cap Y$. 

%% Equivalently, $\Gamma(U \cap Y, \mathcal{N}_{Y/X}) = \Gamma(U \cap Y, \mathcal{N}_{U \cap Y/ U}) = \mathrm{H}^{0}(\mathcal{N}_{U \cap Y/ U})$.

Let $y$ be the generic point of $Y$, $U \cap Y$ is generically generated by a regular sequence of length $p$: $f_{1}, \cdots, f_{p}$, and $(U \cap Y)^{'}$ is generically given by $f_{1}+\varepsilon g_{1}, \cdots, f_{p}+\varepsilon g_{p}$, where $g_{1}, \cdots, g_{p} \in O_{U, y}$.

We use $F_{\bullet}(f_{1}+\varepsilon g_{1}, \cdots, f_{p}+\varepsilon g_{p})$ to denote the Koszul complex associated to the  regular sequence $f_{1}+\varepsilon g_{1}, \cdots, f_{p}+\varepsilon g_{p}$:
\[
 \begin{CD}
  0 @>>> F_{p} @>A_{p}>> F_{p-1} @>A_{p-1}>>  \dots @>A_{2}>> F_{1} @>A_{1}>> F_{0},
 \end{CD}
\]
where each $F_{i}=\bigwedge^{i} O^{\bigoplus p}_{U,y}[\varepsilon]/(\varepsilon^{2})$ and $A_{i}: \bigwedge^{i} O^{\bigoplus p}_{U,y}[\varepsilon]/(\varepsilon^{2})  \to \bigwedge^{i-1} O^{\bigoplus p}_{U,y}[\varepsilon]/(\varepsilon^{2})$ are defined as usual.

Recall that Milnor K-groups with support are rationally defined in terms of eigenspaces of Adams operations in \cite{Y-2}:
\begin{definition}  [Definition 3.2 in \cite{Y-2}] \label{definition:Milnor K-theory with support}
Let $X$ be a finite equi-dimensional noetherian scheme and $x \in X^{(j)}$. For $m \in \mathbb{Z}$, Milnor K-group with support $K_{m}^{M}(O_{X,x} \ \mathrm{on} \ x)$ is rationally defined to be 
\[
  K_{m}^{M}(O_{X,x} \ \mathrm{on} \ x) := K_{m}^{(m+j)}(O_{X,x} \ \mathrm{on} \ x)_{\mathbb{Q}},
\] 
where $K_{m}^{(m+j)}$ is the eigenspace of $\psi^{k}=k^{m+j}$ and $\psi^{k}$ is the Adams operations.

\end{definition}

Here, $K_{m}(O_{X,x} \ \mathrm{on} \ x)$ is Thomason-Trobaugh K-group and $K_{m}^{(m+j)}(O_{X,x} \ \mathrm{on} \ x)_{\mathbb{Q}}$ denotes the image of $K_{m}^{(m+j)}(O_{X,x} \ \mathrm{on} \ x)$ in $K_{m}^{(m+j)}(O_{X,x} \ \mathrm{on} \ x) \otimes_{\mathbb{Z}} \mathbb{Q}$.

\begin{theorem} [Prop 4.12  of \cite{GilletSoule}]  \label{theorem: GilletSoule}
The Adams operations $\psi^{k}$ defined on perfect complexes, defined by Gillet-Soul\'e in \cite{GilletSoule}, satisfy
$\psi^{k}(F_{\bullet}(f_{1}+\varepsilon g_{1}, \cdots, f_{p}+\varepsilon g_{p})) = k^{p}F_{\bullet}(f_{1}+\varepsilon g_{1}, \cdots, f_{p}+\varepsilon g_{p})$.
\end{theorem}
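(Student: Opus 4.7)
The plan is to reduce the statement to the case $p=1$ by exploiting the multiplicativity of the Adams operations, and then verify the base case using the $\gamma$-filtration.

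The first step is to observe that the Koszul complex factors as a derived tensor product of length-one Koszul complexes:
\[
F_{\bullet}(f_{1}+\varepsilon g_{1}, \cdots, f_{p}+\varepsilon g_{p}) \simeq \bigotimes_{i=1}^{p} F_{\bullet}(f_{i}+\varepsilon g_{i}),
\]
which is a resolution precisely because $f_{1}+\varepsilon g_{1}, \cdots, f_{p}+\varepsilon g_{p}$ is a regular sequence in $O_{U,y}[\varepsilon]/(\varepsilon^{2})$. Gillet--Soul\'e's Adams operations on perfect complexes with support are constructed so as to respect the derived tensor product, hence
\[
\psi^{k}\bigl(F_{\bullet}(f_{1}+\varepsilon g_{1}, \cdots, f_{p}+\varepsilon g_{p})\bigr) = \prod_{i=1}^{p} \psi^{k}\bigl(F_{\bullet}(f_{i}+\varepsilon g_{i})\bigr).
\]
It therefore suffices to prove the base case $\psi^{k}(F_{\bullet}(f+\varepsilon g)) = k \cdot F_{\bullet}(f+\varepsilon g)$ in $K_{0}$ with support on $V(f)$.

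For the base case, $F_{\bullet}(f+\varepsilon g)$ is the two-term complex $[O \xrightarrow{f+\varepsilon g} O]$, whose class in $K_{0}$ with support on $V(f)$ lies in the codimension-one piece $F^{1}$ of the $\gamma$-filtration and is concentrated there (the higher filtration pieces vanish for a divisor class). Expressing $\psi^{k}$ in terms of the $\gamma$-operations via Newton's identities, one finds that $\psi^{k}$ acts by multiplication by $k$ on $F^{1}/F^{2}$; since the higher contributions vanish for this particular class, the congruence is an honest equality. Induction on $p$, combined with the multiplicativity above, then yields the factor $k^{p}$.

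The main obstacle is the first step: verifying that Gillet--Soul\'e's $\psi^{k}$ genuinely commutes with the derived tensor product of perfect complexes in the \emph{with-support} setting, and not merely modulo the $\gamma$-filtration. This multiplicativity is the technical heart of Gillet--Soul\'e's construction, realized via a functorial simplicial resolution of perfect complexes that is compatible with $\otimes^{L}$. Once this multiplicativity and the codimension-one computation are secured, the theorem follows immediately.
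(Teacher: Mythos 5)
The paper offers no proof of this statement: it is cited verbatim as Proposition 4.12 of Gillet and Soul\'e's \emph{Intersection theory using Adams operations}, so there is no in-paper argument to compare against; the question is whether your reconstruction of Gillet--Soul\'e's result holds up on its own terms. The skeleton (tensor factorization of the Koszul complex, multiplicativity of $\psi^{k}$ for the cup-product on K-theory with supports, reduction to rank one) is reasonable, and you correctly flag the genuine load-bearing input---that Gillet--Soul\'e's operations on perfect complexes with support are multiplicative for $\otimes^{L}$---as something you must import from their paper rather than establish yourself. Your base case conclusion is also right, but the justification as stated is imprecise: the claim ``the higher filtration pieces vanish for a divisor class'' is stronger than what you need and is not obviously true. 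What actually makes the congruence exact is the nilpotence $\kappa^{2}=0$ in $K_{0}$ with support on $V(f)$, where $\kappa$ is the class of the two-term Koszul complex $[O\xrightarrow{f}O]$: the self-Tor has homology $A/f$ in degrees $0$ and $1$, which cancel, whence $\gamma^{j}(\kappa)=\kappa^{j}=0$ for $j\geq 2$ and Newton's identities collapse $\psi^{k}(\kappa)$ to $k\kappa$. Alternatively, and closer to Gillet--Soul\'e's own mechanism, the Bott cannibalistic class formula $\psi^{k}(\lambda_{-1}(E))=\theta^{k}(E)\cdot\lambda_{-1}(E)$ together with $\theta^{k}(O^{p})=k^{p}$ for a free rank-$p$ module over a local ring (all nontrivial characteristic classes of a free module vanish) gives the factor $k^{p}$ in a single step, with no induction and no appeal to the $\gamma$-filtration.
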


Hence, $F_{\bullet}(f_{1}+\varepsilon g_{1}, \cdots, f_{p}+\varepsilon g_{p})$ is of eigenweight $p$ and can be considered as an element of $K^{(p)}_{0}(O_{U,y}[\varepsilon] \ \mathrm{on} \ y[\varepsilon])_{\mathbb{Q}}$: 
\[
F_{\bullet}(f_{1}+\varepsilon g_{1}, \cdots, f_{p}+\varepsilon g_{p}) \in K^{(p)}_{0}(O_{U,y}[\varepsilon] \ \mathrm{on} \ y[\varepsilon])_{\mathbb{Q}}=K^{M}_{0}(O_{U,y}[\varepsilon] \ \mathrm{on} \ y[\varepsilon]).
\]

It is known that $(U \cap Y)^{'}$ corresponds to an element of $\Gamma(U, \mathcal{N}_{Y/X})$:
\begin{lemma} [Theorem 2.4 in \cite{Hartshorne2}]
 The first order infinitesimal deformations of  $U \cap Y$ in $U$ are one-to-one correspondence with elements of 
 $\Gamma(U, \mathcal{N}_{Y/X})(= \Gamma(U \cap Y, \mathcal{N}_{U \cap Y/ U}))$.
\end{lemma}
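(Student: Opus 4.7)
The plan is to construct mutually inverse maps, locally on $U$, between the set of first order deformations $(U\cap Y)'$ of $U\cap Y$ in $U$ and $\Gamma(U,\mathcal{N}_{Y/X})$, and to verify that both constructions are natural enough to glue to a global bijection. Throughout, write $I \subset O_U$ for the ideal sheaf of $U\cap Y$.

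In the forward direction, suppose $(U\cap Y)'$ has ideal sheaf $I' \subset O_U[\varepsilon]$. Flatness of $O_{(U\cap Y)'}$ over $k[\varepsilon]$ yields the short exact sequence
\[
 0 \longrightarrow O_{U\cap Y} \stackrel{\cdot \varepsilon}{\longrightarrow} O_{(U\cap Y)'} \longrightarrow O_{U\cap Y} \longrightarrow 0.
\]
For $f \in I$, the image of $f$ in $O_{(U\cap Y)'}$ vanishes modulo $\varepsilon$, hence equals $\varepsilon\,\varphi(f)$ for a unique $\varphi(f) \in O_{U\cap Y}$. The resulting map $\varphi\colon I\to O_{U\cap Y}$ is $O_U$-linear, and it annihilates $I^{2}$: if $f = g_{1} g_{2}$ with $g_{i}\in I$, then the image of $f$ in $O_{(U\cap Y)'}$ equals $\varepsilon^{2} \varphi(g_{1})\varphi(g_{2})=0$. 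Hence $\varphi$ descends to an element of $\mathrm{Hom}_{O_{U\cap Y}}(I/I^{2}, O_{U\cap Y}) = \Gamma(U,\mathcal{N}_{Y/X})$.

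In the reverse direction, given $\varphi\in\Gamma(U,\mathcal{N}_{Y/X})$, choose locally an $O_U$-linear lift $\tilde\varphi\colon I\to O_U$ (any two such lifts differ by a map with image in $I$), and set
\[
 I' := \bigl\{\, a + \varepsilon b \in O_U[\varepsilon] : a\in I,\ b-\tilde\varphi(a)\in I\,\bigr\}.
\]
That $I'$ is an ideal follows from $O_U$-linearity of $\tilde\varphi$, with the congruence $\tilde\varphi(ca)\equiv c\,\tilde\varphi(a)\pmod I$ for $c\in O_U$ and $a\in I$ being precisely the $O_{U\cap Y}$-linearity of $\varphi$. The identity $I' + \varepsilon O_U[\varepsilon] = I + \varepsilon O_U[\varepsilon]$, immediate from $a + \varepsilon\tilde\varphi(a)\in I'$ for $a\in I$, shows that the special fiber of $O_U[\varepsilon]/I'$ is $O_{U\cap Y}$; flatness over $k[\varepsilon]$ then reduces, via the local criterion, to a short direct check that the annihilator of $\varepsilon$ in $O_U[\varepsilon]/I'$ equals $\varepsilon\cdot(O_U[\varepsilon]/I')$.

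Finally, unwinding definitions shows the two constructions are mutually inverse: starting from $(U\cap Y)'$, the relation $f - \varepsilon\,\tilde\varphi(f)\in I'$ for every $f\in I$ identifies the ideal built from $\varphi$ with $I'$, while the forward construction applied to the ideal built from $\varphi$ returns $\varphi$. Independence of the auxiliary lift renders the local bijection canonical, hence it sheafifies to a global one. The one real point to be careful about is the compatibility between the ideal property of $I'$ and the $O_{U\cap Y}$-linearity of $\varphi$: this is the single verification that truly uses the definition $\mathcal{N}_{Y/X}=\mathcal{H}om_{O_Y}(I/I^{2},O_Y)$, and it is what makes the whole correspondence function.
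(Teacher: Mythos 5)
The paper does not prove this lemma; it simply cites Theorem 2.4 of Hartshorne's \emph{Deformation Theory}. Your blind write-up is essentially a reconstruction of that standard argument, and the forward direction and the overall strategy are sound. Two points in the reverse direction deserve attention.

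First, you posit a local $O_U$-linear lift $\tilde\varphi\colon I\to O_U$ of the composite $I\to I/I^{2}\xrightarrow{\varphi} O_{U\cap Y}$ along the surjection $O_U\to O_{U\cap Y}$. Such a module-theoretic lift exists only when $I$ is (locally) projective, which for codimension $p>1$ is generally false, even after shrinking $U$. The fix is easy but should be made explicit: the set
\[
I' \;=\; \bigl\{\, a+\varepsilon b \in O_U[\varepsilon] : a\in I,\ \overline{b} = -\varphi(\overline{a})\ \text{in}\ O_{U\cap Y}\,\bigr\}
\]
is well defined with no lift at all, since only the image of $b$ in $O_{U\cap Y}$ is constrained; your verification that $I'$ is an ideal and that $O_U[\varepsilon]/I'$ is $k[\varepsilon]$-flat goes through verbatim once phrased this way, and in fact you already note that the ideal check uses only the $O_{U\cap Y}$-linearity of $\varphi$. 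If you prefer to keep $\tilde\varphi$, it suffices to take it as an arbitrary set-theoretic lift, not an $O_U$-linear one.

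Second, there is an internal sign inconsistency. In the forward direction you set $\bar f = \varepsilon\varphi(f)$, which, for $f+\varepsilon g\in I'$, gives $\varphi(f)=-\bar g$. Your reverse construction $I'=\{a+\varepsilon b : b-\tilde\varphi(a)\in I\}$ then composes with the forward map to return $-\varphi$, not $\varphi$; and your concluding remark that $f-\varepsilon\tilde\varphi(f)\in I'$ contradicts the definition of $I'$ you gave (which requires $f+\varepsilon\tilde\varphi(f)\in I'$). Replacing the condition with $b+\tilde\varphi(a)\in I$, as in the display above, makes the two maps genuinely inverse. Neither issue affects the existence of the bijection, but both should be corrected for the proof to be self-consistent.
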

So we have the following map(Definition 2.4 in \cite{Y-3}),
\begin{equation}
\mu:   \ \Gamma(U, \   \mathcal{N}_{Y/X}) \longrightarrow K^{M}_{0}(O_{U,y}[\varepsilon] \ \mathrm{on} \ y[\varepsilon])
\end{equation}
 \[
 (U \cap Y)' \longrightarrow  F_{\bullet}(f_{1}+\varepsilon g_{1}, \cdots, f_{p}+\varepsilon g_{p}).
\]

Let $K^{M}_{0}(O_{U,y}[\varepsilon] \ \mathrm{on} \ y[\varepsilon],\varepsilon)$ denote the relative K-group, that is, the kernel of the natural projection
\[
K^{M}_{0}(O_{U,y}[\varepsilon] \ \mathrm{on} \ y[\varepsilon]) \xrightarrow{\varepsilon =0} K^{M}_{0}(O_{U,y} \ \mathrm{on} \ y).
\] 
%% In other words, $K^{M}_{0}(O_{X,y}[\varepsilon] \ \mathrm{on} \ y[\varepsilon],\varepsilon)$ is the relative K-group
%% $K^{(q)}_{0}(O_{X,y}[\varepsilon] \ \mathrm{on} \ y[\varepsilon],\varepsilon)_{\mathbb{Q}}$.

We have shown that relative Chern character induces the following isomorphisms between relative K-groups and local cohomology groups, see 
Corollary 3.2 in \cite{Y-3}, which is a particular case of Corollary 9.5 in \cite{DHY} or Corollary 3.11 in \cite{Y-2} \footnote{The hypothesis ``projective" therein can be loosened, $O_{U,y}$ is still a regular local ring of dimension $p$. The proofs in \cite{DHY} and \cite{Y-2} go straightly},
\begin{equation}
 K^{M}_{0}(O_{U,y}[\varepsilon] \ \mathrm{on} \ y[\varepsilon],\varepsilon) \xrightarrow{\cong} H_{y}^{p}(\Omega^{p-1}_{U/\mathbb{Q}}).
\end{equation}

Composing the natural projection
\[
K^{M}_{0}(O_{U,y}[\varepsilon] \ \mathrm{on} \ y[\varepsilon]) \to  K^{M}_{0}(O_{U,y}[\varepsilon] \ \mathrm{on} \ y[\varepsilon],\varepsilon),
\] 
with the above isomorphism(1.2), one has the following natural surjective map(Definition 3.3 in \cite{Y-3}),
\begin{equation}
\mathrm{Ch}:  K^{M}_{0}(O_{U,y}[\varepsilon] \ \mathrm{on} \ y[\varepsilon]) \to H_{y}^{p}(\Omega_{U/\mathbb{Q}}^{p-1}).
\end{equation}

This Ch map can be described by a beautiful construction of B. Ang\'eniol and M. Lejeune-Jalabert \cite{A-LJ}, 
see also page 5-6 of \cite{Y-3} for a brief summary. For our purpose, the image of the Koszul complex $F_{\bullet}(f_{1}+\varepsilon g_{1}, \cdots, f_{p}+\varepsilon g_{p})$ under the Ch map can be described explicitly.
For simplicity, we assume $g_{2}= \cdots =g_{p}=0$, then the image of
 $F_{\bullet}(f_{1}+\varepsilon g_{1}, f_{2}, \cdots, f_{p})$ under the map Ch, can be represented by the following diagram
\[
\begin{cases}
 \begin{CD}
    F_{\bullet}(f_{1}, \cdots, f_{p}) @>>> O_{U, y}/(f_{1}, \cdots,  f_{p}) \\
   F_{p}(\cong O_{U,y}) @>g_{1}df_{2} \wedge \dots \wedge df_{p}>> F_{0} \otimes \Omega_{O_{U,y}/ \mathbb{Q}}^{p-1}(\cong\Omega_{O_{U,y}/ \mathbb{Q}}^{p-1}),
 \end{CD}
\end{cases}
 \]
 where $d=d_{\mathbb{Q}}$ and $F_{\bullet}(f_{1}, \cdots, f_{p})$ is the Koszul complex associated to the  regular sequence $f_{1}, \cdots, f_{p}$.

We define the following map
\begin{equation}
\pi_{U}: \Gamma(U, \mathcal{N}_{ Y/ X}) \to  H_{y}^{p}(\Omega_{U/\mathbb{Q}}^{p-1}),
\end{equation}
by composing Ch with $\mu$:
\[
 \Gamma(U, \mathcal{N}_{ Y/ X}) \xrightarrow[\mathrm{(1.1)}]{\mu}   K^{M}_{0}(O_{U,y}[\varepsilon] \ \mathrm{on} \ y[\varepsilon]) \xrightarrow[\mathrm{(1.3)}]{\mathrm{Ch}} H_{y}^{p}(\Omega_{U/\mathbb{Q}}^{p-1}).
\]

For any inclusion $V \subset U$ such that $V \cap Y \neq \emptyset$, one checks the following diagram is commutative
\[
\begin{CD}
\Gamma(U, \mathcal{N}_{ Y/ X}) @>\pi_{U}>>  H_{y}^{p}(\Omega_{U/\mathbb{Q}}^{p-1})\\
   @VVV @VV=V \\
  \Gamma(V, \mathcal{N}_{ Y/ X}) @>\pi_{V}>> H_{y}^{p}(\Omega_{V/\mathbb{Q}}^{p-1}).
 \end{CD}
\]

\section{\textbf{Theorem and interpretation}}
Recall that $X$ is a nonsingular projective variety over a field $k$ of characteristic $0$, $Y \subset X$ is a subvariety of codimension $p$.
Let $y$ be the generic point of $Y$, one defines a presheaf $\underline{H}_{y}^{p}(\Omega_{X/\mathbb{Q}}^{p-1})$ on $X$ as follows
\[
\begin{cases}
 U \to  &  H_{y}^{p}(U, \Omega_{X/\mathbb{Q}}^{p-1}), \  \mathrm{if} \  y \in U, \\
  U \to & 0,    \ \mathrm{if} \ y \notin U.
 \end{cases}
\]
In fact, $\underline{H}_{y}^{p}(\Omega_{X/\mathbb{Q}}^{p-1})$ is a sheaf, which is identified with the sheaf $i_{y}\ast H_{y}^{p}(\Omega_{X/\mathbb{Q}}^{p-1})$, where $i_{y}: \{ y\} \hookrightarrow X$ is the immersion and $H_{y}^{p}(\Omega_{X/\mathbb{Q}}^{p-1})$ is a constant sheaf on $y$. Moreover, $i_{y}\ast H_{y}^{p}(\Omega_{X/\mathbb{Q}}^{p-1})$ is flasque, so is $\underline{H}_{y}^{p}(\Omega_{X/\mathbb{Q}}^{p-1})$.

\begin{definition}\label{definition: composition}
In the notation above, there exists the following morphism of sheaves
\begin{equation}
\pi :  \mathcal{N}_{ Y/ X} \to  \underline{H}_{y}^{p}(\Omega_{X/\mathbb{Q}}^{p-1}),
\end{equation}
 which is locally defined by (1.4).
\end{definition}

The map $\pi$(2.1) induces a map between cohomology groups:
\begin{equation}
\pi : H^{1}(\mathcal{N}_{ Y/ X}) \to H^{1}(\underline{H}_{y}^{p}(\Omega_{X/\mathbb{Q}}^{p-1})).
\end{equation}
There exists the following spectral sequence, see Motif E on page 221 of \cite{Hartshorne},
\[
 H^{m}(X, \underline{H}_{y}^{n}(\Omega_{X/\mathbb{Q}}^{p-1})) \Rightarrow H_{y}^{k}(\Omega_{X/\mathbb{Q}}^{p-1}), \ \mathrm{with} \ m+n = k.
\]

$\Omega_{X/\mathbb{Q}}^{p-1}$
can be written as a direct limit of direct sum of $O_{X}$(as $O_{X}$-module), though it is not of finite type. 
Since $O_{X}$ is Cohen-Macaulay, $\underline{H}_{y}^{n}(O_{X}) = 0 $, unless $n=p$.
Consequently, $\underline{H}_{y}^{n}(\Omega_{X/\mathbb{Q}}^{p-1}) = 0 $, unless $n=p$. For our purpose, by taking $k=p+1$, we obtain that $H^{1}(X, \underline{H}_{y}^{p}(\Omega_{X/\mathbb{Q}}^{p-1})) = H_{y}^{p+1}(\Omega_{X/\mathbb{Q}}^{p-1})$.

So the map $\pi$(2.2)above is 
\[
\pi : H^{1}(\mathcal{N}_{ Y/ X}) \to H_{y}^{p+1}(\Omega_{X/\mathbb{Q}}^{p-1}).
\]

Composing $\pi$ with the map
\[
i: H_{y}^{p+1}(\Omega_{X/\mathbb{Q}}^{p-1}) \to H^{p+1}(\Omega_{X/\mathbb{Q}}^{p-1}),
\]
we have 
\[
\tilde{\pi} := i \circ \pi : H^{1}(\mathcal{N}_{ Y/ X}) \to H^{p+1}(\Omega_{X/\mathbb{Q}}^{p-1}).
\]

\begin{theorem} \label{theorem: Main}
The map 
\[
\tilde{\pi}: H^{1}(\mathcal{N}_{ Y/ X}) \to H^{p+1}(\Omega_{X/\mathbb{Q}}^{p-1})
\]
is zero.
\end{theorem}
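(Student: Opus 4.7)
The plan is to establish the theorem by showing that the intermediate group $H_{y}^{p+1}(\Omega_{X/\mathbb{Q}}^{p-1})$ through which $\tilde\pi$ factors is already zero, after which $\tilde\pi=i\circ\pi$ vanishes trivially. The key leverage comes from the flasqueness of $\underline{H}_{y}^{p}(\Omega_{X/\mathbb{Q}}^{p-1})$ observed immediately before the theorem.

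First, I would make the factorization of $\tilde\pi$ explicit. The sheaf morphism (2.1) induces on first sheaf cohomology the map
\[
H^{1}(\pi):H^{1}(\mathcal{N}_{Y/X})\longrightarrow H^{1}(X,\underline{H}_{y}^{p}(\Omega_{X/\mathbb{Q}}^{p-1})).
\]
The local-to-global spectral sequence displayed just above the theorem, $E_{2}^{m,n}=H^{m}(X,\underline{H}_{y}^{n}(\Omega_{X/\mathbb{Q}}^{p-1}))\Rightarrow H_{y}^{m+n}(\Omega_{X/\mathbb{Q}}^{p-1})$, collapses onto the single row $n=p$ by the Cohen--Macaulay argument already supplied in the excerpt. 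Its edge map furnishes the identification $H^{1}(X,\underline{H}_{y}^{p}(\Omega_{X/\mathbb{Q}}^{p-1}))\cong H_{y}^{p+1}(\Omega_{X/\mathbb{Q}}^{p-1})$ used to name the target of $\pi$. Consequently $\tilde\pi$ factors as
\[
H^{1}(\mathcal{N}_{Y/X})\xrightarrow{H^{1}(\pi)} H^{1}(X,\underline{H}_{y}^{p}(\Omega_{X/\mathbb{Q}}^{p-1}))\xrightarrow{\cong} H_{y}^{p+1}(\Omega_{X/\mathbb{Q}}^{p-1})\xrightarrow{i} H^{p+1}(\Omega_{X/\mathbb{Q}}^{p-1}).
\]

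Second, I would invoke the flasqueness of $\underline{H}_{y}^{p}(\Omega_{X/\mathbb{Q}}^{p-1})=i_{y\ast}H_{y}^{p}(\Omega_{X/\mathbb{Q}}^{p-1})$ recorded in the preceding paragraph (which follows from the fact that $i_{y\ast}$ of any sheaf on the one-point space $\{y\}$ has surjective restriction maps on open subsets of $X$). Flasque sheaves have vanishing higher cohomology, so $H^{1}(X,\underline{H}_{y}^{p}(\Omega_{X/\mathbb{Q}}^{p-1}))=0$. Plugging this into the factorization forces $\tilde\pi=0$.

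The only technical point requiring care is the factorization in step one, namely, verifying that the map $\pi:H^{1}(\mathcal{N}_{Y/X})\to H_{y}^{p+1}(\Omega_{X/\mathbb{Q}}^{p-1})$ as named in the excerpt really is $H^{1}(\pi)$ post-composed with the edge isomorphism of the spectral sequence, and not some other map that happens to land in the same group. This is a naturality check on the definition of $\pi$: it is constructed by functoriality of $H^{1}$ applied to the sheaf map (2.1), and the renaming of its target from $H^{1}$ to $H_{y}^{p+1}$ is nothing more than the edge identification provided by the collapsed spectral sequence. Once this is in place, the theorem follows in one line from the vanishing of the higher cohomology of a flasque sheaf, without any further computation.
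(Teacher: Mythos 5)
Your proposal is correct and reproduces the first of the two arguments the paper itself gives: the target $H_{y}^{p+1}(\Omega_{X/\mathbb{Q}}^{p-1})$ is identified with $H^{1}(X,\underline{H}_{y}^{p}(\Omega_{X/\mathbb{Q}}^{p-1}))$ via the collapsed local-to-global spectral sequence, and this vanishes because the sheaf is flasque. The paper also offers a second, independent argument (local cohomology above the dimension of $\mathrm{Spec}(O_{X,y})$ vanishes), but your route coincides with the primary one.
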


\begin{proof}
This follows from the fact that $H_{y}^{p+1}(\Omega_{X/\mathbb{Q}}^{p-1})=0$(so that the maps $\pi$ and $\tilde{\pi}$
are zero), which can be seen from two different ways. 

Since $\underline{H}_{y}^{p}(\Omega_{X/\mathbb{Q}}^{p-1})$  is a flasque sheaf,  $H_{y}^{p+1}(\Omega_{X/\mathbb{Q}}^{p-1})=H^{1}(X, \underline{H}_{y}^{p}(\Omega_{X/\mathbb{Q}}^{p-1})) =0$.

Alternatively, $H_{y}^{p+1}(\Omega_{X/\mathbb{Q}}^{p-1})=H_{[y]}^{p+1}(\mathrm{Spec}(O_{X,y}), \widetilde{\Omega}_{O_{X,y}/\mathbb{Q}}^{p-1})$, where $[y]$ is the maximal idea of $O_{X,y}$ and 
$\widetilde{\Omega}_{O_{X,y}/\mathbb{Q}}^{p-1}$ is the sheaf associated to $\Omega_{O_{X,y}/\mathbb{Q}}^{p-1}$.
Since $\mathrm{Spec}(O_{X,y})$ has dimension $p$, it is obvious that 
\[
H_{[y]}^{p+1}(\mathrm{Spec}(O_{X,y}), \widetilde{\Omega}_{O_{X,y}/\mathbb{Q}}^{p-1})=0.
\]

\end{proof}

\textbf{Interpretation:}
For $X$ be a smooth projective variety over a field $k$ of characteristic $0$ and 
$Y \subset X$ a locally complete intersection of codimension $p$,  it is known that the obstruction to lifting $Y$ for embedded deformations lies in $ H^{1}(\mathcal{N}_{ Y/ X})$, e.g., see $\mathrm{Theorem \ 6.2}$(page 47) of \cite{Hartshorne2}.

One the other hand, $Y$ defines an element of Chow group $CH^{p}(X)$ and the obstruction to lifting it lies in $H^{p+1}(\Omega_{X/\mathbb{Q}}^{p-1})$, e.g., see \cite{BEK, GGChow}.

From this viewpoint, the map 
\[
\tilde{\pi} :  H^{1}(\mathcal{N}_{ Y/ X}) \to  H^{p+1}(\Omega_{X/\mathbb{Q}}^{p-1})
\]
is a map between obstruction spaces and it is zero, meaning obstructions vanish by viewing $Y$ as a cycle. 

Since $Y \subset X$ is a locally complete intersection, one can locally lift $Y$ and
the obstructions arise when one tries to glue from local to global. Viewed as a cycle,
$Y=\overline{\{y\}}$, there is no necessary to worry about gluing from local to global(see Lemma 3.9 of  \cite{Y-2} for related discussion), since the generic point $y$ lies in any non-empty open subset of $Y$. This is already  reflected in the proof of Theorem 2.2. What we have shown in Theorem 2.2 is $H_{y}^{p+1}(\Omega_{X/\mathbb{Q}}^{p-1})=0$ and we do not know whether $H^{p+1}(\Omega_{X/\mathbb{Q}}^{p-1})=0$ or not.

%% We may look at this from different viewpoint:
%% \begin{remark}
%% For $\{ X_{j}\}_{j} $ is a family of trivial deformation of $X$, that is, for each $j$,  $X_{j}= X \times_{k} \mathrm{Spec}(k[t]/ t^{j+1})$, the Kodaira-Spencer class is zero so that the obstruction of lifting  elements of Chow group $CH^{q}(X)$ to higher order vanishes. See $\mathrm{Green-Griffiths}$ \cite{GGChow} for detailed discussion, especially Definition on page 492 and Proposition 6.2 on page 502.

%% \end{remark}

%% \begin{remark}
%% To understand deformation of cycles,  one can't only focus on trivial deformations and need to understand a general setting in the following, as studied by $\mathrm{Green-Griffiths}$ \cite{GGChow} and $\mathrm{Bloch-Esnault-Kerz}$ \cite{BEK}, let
 %% \[
%%  f: \mathcal{X} \to S
 %% \]
 %% be a smooth projective morphism, where $S = \mathrm{Spec}(k[[t]])$ and $k$ is a field of characteristic $0$. Let $X_{j}= \mathcal{X} \times_{S} S_{j}$, where $S_{j} = \mathrm{Spec}(k[t]/ t^{j+1})$.

%% Infinitesimal lifting property says that $X_{j}$ is \textbf{locally} a trivial deformation of $X$, see Lemma 2.4 in \cite{BEK} for details. This means we can eliminate obstructions locally as we have shown in Theorem 2.2, but still may meet obstructions globally. 

%% \end{remark}

\textbf{Acknowledgements}
%% This short note comes out when the author tried to understand Spencer Bloch's semi-regularity map \cite{Bloch2} and 
 %%Mark Green and Phillip Griffiths' work on deformation of cycles \cite{GGtangentspace}.
 The author is very grateful to Spencer Bloch for discussions \cite{Bloch1}, and must also record that the idea of eliminating obstructions is due to Mark Green and Phillip Griffiths \cite{GGtangentspace}(page187-190). He thanks all of them for help and encouragement.


\begin{thebibliography}{99}

 \bibitem{A-LJ}
 B. ~Ang\'eniol and M. ~Lejeune-Jalabert, \emph{Calcul diff\'erentiel et classes caract\'eristiques en g\'eom\'etrie alg\'ebrique}, (French) [Differential calculus and characteristic classes in algebraic geometry] With an English summary, Travaux en Cours [Works in Progress], 38. Hermann, Paris, 1989.

 \bibitem{Bloch2}
 S.~Bloch, \emph{Semi-regularity and de Rham cohomology}, Invent. Math. 17 (1972), 51-66.
 
 \bibitem{Bloch1}
  S.~Bloch, \emph{Private discussions at Tsinghua University},  Beijing, Spring semester (2015) and Fall semester(2016). 
 
  
 \bibitem{BEK}
S.~Bloch, H.~Esnault and M.~Kerz, \emph{Deformation of algebraic cycle classes in characteristic zero}, Algebraic Geometry 1(3) (2014), 290-310.

\bibitem{DHY}
  B.~Dribus, J.W.Hoffman and S.Yang,   \emph{Tangents to Chow Groups: on a question of Green-Griffiths},  Bollettino dell'Unione Matematica Italiana, available online, DOI: 10.1007/s40574-017-0123-3.
  
 \bibitem{GGdeformation}
 M.~Green and P.~Griffiths, \emph{Formal deformation of Chow groups}, The legacy of Niels Henrik Abel. (2004) 467-509 Springer, Berlin.


\bibitem{GilletSoule}
H.~Gillet and C.~Soul\'e, \emph{Intersection theory using Adams operations}, Invent. Math. 90 (1987), no. 2, 243-277.

\bibitem{GGChow}
M.~Green and P.~Griffiths, \emph{Formal deformation of Chow groups}, The legacy of Niels Henrik Abel. (2004) 467-509 Springer, Berlin.


\bibitem{GGtangentspace}
M.~Green and P.~Griffiths, \emph{On the Tangent space to the space of algebraic cycles on a smooth algebraic variety}, Annals of Math Studies, 157. Princeton University Press, Princeton, NJ, 2005, vi+200 pp. ISBN: 0-681-12044-7.

%% \bibitem{GH}
%% P.~Griffiths and J.~Harris, \emph{Principles of algebraic geometry}, Pure and Applied Mathematics. Wiley-Interscience [John Wiley \& Sons], New York, 1978. xii+813 pp. ISBN: 0-471-32792-1.

\bibitem{Hartshorne}
R.~Hartshorne,  \emph{Residues and duality}, Lecture Notes in Mathematics, No. 20 Springer-Verlag, Berlin-New York 1966 vii+423 pp.

\bibitem{Hartshorne2}
R.~Hartshorne, \emph{Deformation theory}, Graduate Texts in Mathematics, 257. Springer, New York, 2010. viii+234 pp. ISBN: 978-1-4419-1595-5. 

\bibitem{KS}
K.~Kodaira and D.C.~Spencer, \emph{A theorem of completeness of characteristic systems of complete continuous systems}, Amer. J. Math. 81(1959), 477-500.
%% \bibitem{Hochschild}
%% G. ~Hochschild, \emph{Relative homological algebra}, Trans. Amer. Math. Soc. 82 (1956), 246-269. 
 


%% \bibitem{TT}
%% R. W.~Thomason and T.~Trobaugh, \emph{Higher algebraic K-theory of schemes and of derived
%% categories}, In The Grothendieck Festschrift, Volume III, volume 88 of Progress in Math.,
%% pages 247-436. Birkh$\ddot{a}$user, Boston, Bas\'el, Berlin, 1990.

%% T.F. Ng, Geometry of algebraic varieties, Princeton University Ph.D thesis, 2004.

\bibitem{Ng}
T.F.~Ng, \emph{Geometry of algebraic varieties}, Princeton University Ph.D thesis, 2004.  


\bibitem{Y-2}
S.~Yang, \emph{On extending Soul\'e's variant of Bloch-Quillen identification}, accepted by Asian Journal of Mathematics,  arXiv:1604.04046.

\bibitem{Y-3}
S.~Yang,  \emph{K-theory, local cohomology and tangent spaces to Hilbert schemes}, submitted, arXiv:1604.02629.

\bibitem{Y-33}
 S.~Yang, \emph{Deformation of K-theoretic cycles},  submitted, arXiv:1603.01008.

\bibitem{Y-4}
S.~Yang,  \emph{Eliminating obstructions: curves on a 3-fold}, arXiv:1611.07279.
\end{thebibliography}
\end{document}